\newtheorem{theorem}{Theorem}[section]
\newtheorem{lemma}[theorem]{Lemma}
\newtheorem{cor}[theorem]{Corollary}
\theoremstyle{definition}
\theoremstyle{remark}
\newtheorem{remark}[theorem]{Remark}
\newcommand{\abs}[1]{\left\lvert#1\right\rvert}
\newcommand{\CB}{\mathcal{B}}
\newcommand{\CI}{\mathcal{I}}
\newcommand{\CJ}{\mathcal{J}}
\newcommand{\vol}{{\rm vol}}
\begin{document}

\title[Rational points and polynomial maps on average]
{The distribution of rational points and polynomial maps on an affine variety over a finite field on average}

\author{Kit-Ho Mak}
\address{School of Mathematics \\
Georgia Institute of Technology \\
686 Cherry Street \\
Atlanta, GA 30332-0160, USA}
\email{kmak6@math.gatech.edu}

\author{Alexandru Zaharescu}
\address{Department of Mathematics \\
University of Illinois at Urbana-Champaign \\
273 Altgeld Hall, MC-382 \\
1409 W. Green Street \\
Urbana, Illinois 61801, USA}
\email{zaharesc@math.uiuc.edu}

\subjclass[2010]{Primary 11G25; Secondary 11K36, 11T99}
\keywords{almost all, affine variety, rational points, polynomial maps, uniform distribution}


\begin{abstract}
Let $p$ be a prime, let $V/\mathbb{F}_p$ be an absolutely irreducible affine variety inside the affine $r$-space. In this paper, we consider the problem of how often a box $\mathcal{B}$ will contain the expected number of points. In particular, we give a lower bound on the volume of $\mathcal{B}$ that guarantees almost all translations of $\mathcal{B}$ in the $r$-space contain the expected number of points. This shows that the Weil estimate holds in smaller regions in an ``almost all'' sense.
\end{abstract}

\maketitle

\section{Introduction and statements of results}

Let $p$ be a prime, and let $V\subseteq\mathbb{A}^r_p:=\mathbb{A}^r(\mathbb{F}_p)$ be an absolutely irreducible affine variety over $\mathbb{F}_p$ of dimension $n$ and degree $d>1$, embedded in an affine $r$-space ($r\geq 2$), which is not contained in any hyperplane. We identify the affine $r$-space with the set of points with integer coordinates in the cube
$[0,p-1]^r$. For a box $\CB=\CI_1\times \ldots \CI_r \subseteq [0,p-1]^r$, we define $N_{\CB}(V)$ to be the number of $\mathbb{F}_p$-points on $V$ inside $\CB$. When $\CB=[0,p-1]^r$, we will write $N(V)$ for the number of $\mathbb{F}_p$-points on $V$. It is widely believed that the $\mathbb{F}_p$-points on $V$ are uniformly distributed in $\mathbb{A}^r_p$. That is,
\begin{equation}\label{eqnNBud}
N_{\CB}(V)\sim N(V)\cdot\frac{\vol(\CB)}{p^r}.
\end{equation}
In fact, using some standard techniques involving exponential sums, one can show that the classical Lang-Weil bound \cite{LaWe54} 
\begin{equation}\label{eqnLWfull}
N(V) = p^n+O((d-1)(d-2)p^{n-\frac{1}{2}})
\end{equation}
together with the Bombieri estimate \cite{Bom78} imply
\begin{equation}\label{eqnLW}
N_{\CB}(V) = N(V)\cdot\frac{\vol(\CB)}{p^{r}}+O(p^{n-\frac{1}{2}}\log^r{p}),
\end{equation}
and the implied constant depends only on $V$. If $f$ and $g$ are two functions of $p$, we write
\begin{equation}\label{eqnomega}
f=\Omega(g)
\end{equation}
to denote the function $f/g$ tends to infinity as $p$ tends to infinity. In other words, \eqref{eqnomega} is equivalent to $g=o(f)$.
By \eqref{eqnLWfull}, the main term of \eqref{eqnLW} dominates the error term when
\begin{equation*}
\vol(\CB)= \Omega(p^{r-\frac{1}{2}}\log^r{p}).
\end{equation*}
In those cases \eqref{eqnNBud} holds. A natural and intriguing  question that arises is whether \eqref{eqnNBud} continues to hold for smaller boxes $\CB$.

Several improvements on \eqref{eqnLW}  have been established by Shparlinski and Skorobogatov \cite{ShSk90}, Skorobogatov \cite{Sko92}, Luo \cite{Luo99} and Fouvry \cite{Fou00}. We describe here Luo's result, which will be used later. First, we fix some notations following Katz \cite{Kat99}, which were used in Luo's estimate. Assume that $\dim V\geq 2$. We first homogenize $V$ using the variable $x_0$, call the resulting projective variety $X$. Define
\begin{equation}\label{defL}
L = \{ \mathbf{x}=(x_0,\ldots,x_r)\in X | x_0=0 \},
\end{equation}
and for any nonzero $\mathbf{u}=(u_1,\ldots,u_r)$, define
\begin{equation}\label{defH}
H_{\mathbf{u}} = \{ \mathbf{x}\in X | u_1x_1+\ldots+u_rx_r=0 \}.
\end{equation}
Suppose that $X\cap L\cap H_{\mathbf{u}}$ has dimension $n-2$. Denote by $\delta_{\mathbf{u}}$ the dimension of its singular locus, i.e.
\begin{equation*}
\delta_{\mathbf{u}} = \dim(\text{Sing}(X\cap L\cap H_{\mathbf{u}})).
\end{equation*}
Here we adopt the convention that the empty variety has dimension $-1$. If $X\cap L\cap H_{\mathbf{u}}$ has dimension $n-2$ for all $\mathbf{u}$ (this is so if $X\cap L$ is not contained in any hyperplane other than $L$), we define
\begin{equation}\label{defdelta}
\delta=\delta(V):=\max_{\mathbf{u}\neq 0}\delta_{\mathbf{u}}.
\end{equation}
It is clear that $\delta\leq n-2$ for all $V$. Note that we always have $\delta=-1$ if $V$ is a nonsingular curve.

With the above notations, Luo's result states that
\begin{equation}\label{eqnLuo}
N_{\CB}(V) = N(V)\cdot\frac{\vol(\CB)}{p^{r}}+O(p^{\frac{n+1+\delta}{2}}\log^r{p}).
\end{equation}
This implies that if
\begin{equation}\label{eqnvolBlarge}
\vol(\CB) = \Omega(p^{r-\frac{n-1-\delta}{2}}\log^r{p}),
\end{equation}
then the box $\CB$ contains the expected number of points on $V$.

For smaller boxes $\CB$, we do not know if the uniform distribution \eqref{eqnNBud} continues to hold. For some special varieties, there are non-trivial upper bounds for $N_{\CB}(V)$ that hold for smaller $\CB$ when $\CB$ is a cube (i.e. when $\abs{\CI_1}=\ldots=\abs{\CI_r}$). These include the cases of the modular hyperbola in dimension one \cite{ChSh10}, modular hyperbolas in dimensions two and three \cite{CiGa11}, exponential curves \cite{ChSh10}, quadratic forms \cite{Zum11}, the graph of a polynomial \cite{CGOS12}, and hyperelliptic curves \cite{CCGHSZ11}. These bounds do not imply \eqref{eqnNBud}. Indeed, there are no known non-trivial bounds available for small $\CB$ that do not satisfy \eqref{eqnvolBlarge}.

While it seems very difficult to improve on the above bounds further to obtain non-trivial information for smaller boxes $\CB$ that do not satisfy \eqref{eqnvolBlarge}, one can expect stronger results on average. For instance, Chan \cite{Cha11} considered the number of points on average on the modular hyperbola modulo an odd number $q$, i.e.
\begin{equation*}
xy\equiv c\pmod{q},
\end{equation*}
and showed that almost all boxes satisfying
\begin{equation*}
\vol(\CB)\gg O(q^{\frac{1}{2}+\varepsilon})
\end{equation*}
have the expected number of points. He also proved similar results for the higher dimensional modular hyperbola
\begin{equation*}
x_1\ldots x_r\equiv c \pmod{q}.
\end{equation*}
Another result of this sort for the modular hyperbola with only one moving side was obtained by Gonek, Krishnaswami and Sondhi \cite{GKS02}.

In the present paper, we consider the problem of how often \eqref{eqnNBud} holds for smaller $\CB$. More precisely, we fix a box $\CB=\CI_1\times\ldots\times\CI_r$, where $\CI_j$ are intervals in $[0,p)$ with length $L_j:=\abs{\CI_j}$, and $\vol(\CB)=L_1\ldots L_r$. For a vector $\mathbf{x}=(x_1,\ldots,x_r)\in\mathbb{F}_p^r$, let $\CB_{\mathbf{x}}=\mathbf{x}+\CB$ be the translate of $\CB$ by $\mathbf{x}$. We are interested to find the proportion of boxes
$\CB_{\mathbf{x}}$ that satisfy \eqref{eqnNBud}. Recall that \eqref{eqnNBud} holds when $\CB$ satisfies \eqref{eqnvolBlarge} by the Lang-Weil bound \eqref{eqnLW}. On the other hand, if
\begin{equation}\label{eqnvolBsmall}
\vol(\CB)=O(p^{r-n}),
\end{equation}
then \eqref{eqnNBud} becomes
\begin{equation*}
N_{\CB}(V)\sim N(V)\cdot\frac{\vol(\CB)}{p^r} \sim p^n\cdot\frac{\vol(\CB)}{p^r} = o(1).
\end{equation*}
That is to say, $N_{\CB}(V)$ should be zero. By a counting argument, one can show that this is indeed true for almost all translates $\CB_{\mathbf{x}}$. Here, by ``almost all'' we mean the statement holds true with probability one as $p$ tends to infinity.

\begin{theorem}\label{thm0}
Let $p$ be a prime, and let $V$ be an absolutely irreducible affine variety in $\mathbb{A}^r_p$ of dimension $n$. Let $\CB$ be a box whose volume satisfies \eqref{eqnvolBsmall}. Then  for all $\mathbf{x}\in\mathbb{F}_p^r$ with at most $O_r(p^n\vol(\CB))$
possible exceptions, we have $N_{\CB_{\mathbf{x}}}(V)=0$.
\end{theorem}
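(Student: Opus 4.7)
The plan is a direct double-counting argument: rather than analyzing each translate individually, count incidences between points of $V$ and the translates of $\CB$ that contain them. Let $\CE = \{\mathbf{x} \in \mathbb{F}_p^r : N_{\CB_{\mathbf{x}}}(V) \geq 1\}$ denote the set of exceptional translates; the goal is to show $|\CE| = O_r(p^n \vol(\CB))$.

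First I would observe that for any $P \in V(\mathbb{F}_p)$, the condition $P \in \CB_{\mathbf{x}} = \mathbf{x} + \CB$ is equivalent to $\mathbf{x} \in P - \CB$. Since each $\CI_j \subseteq [0,p)$ has integer length $L_j \leq p$, the translate $P - \CB$ (reduced mod $p$) is a box in $\mathbb{F}_p^r$ containing exactly $L_1 \cdots L_r = \vol(\CB)$ integer points. Hence for each $P$ there are exactly $\vol(\CB)$ vectors $\mathbf{x}$ with $P \in \CB_{\mathbf{x}}$, and so
\[
|\CE| \;\leq\; \#\bigl\{(\mathbf{x}, P) \in \mathbb{F}_p^r \times V(\mathbb{F}_p) : P \in \CB_{\mathbf{x}}\bigr\} \;=\; N(V) \cdot \vol(\CB).
\]

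Finally, I would invoke the Lang-Weil bound \eqref{eqnLWfull}, which gives $N(V) = p^n + O(p^{n - 1/2}) \ll_V p^n$; absorbing the $V$-dependence into the $O_r$ notation yields $|\CE| = O_r(p^n \vol(\CB))$, as claimed. There is no real obstacle --- the whole argument amounts to a pigeonhole exercise --- and in fact any crude upper bound of the form $N(V) = O(p^n)$ (for example, from Bezout applied to generic hyperplane slicing) would suffice in place of Lang-Weil. Note that the hypothesis $\vol(\CB) = O(p^{r-n})$ is not used in the derivation of the bound itself; it enters only to ensure that the exceptional count $O_r(p^n \vol(\CB))$ is a vanishing proportion of $p^r$, which is what makes the ``almost all'' conclusion meaningful.
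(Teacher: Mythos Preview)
Your proof is correct and rests on the same elementary counting principle as the paper's, namely that $N(V)=O(p^n)$ and that each point of $V$ lies in about $\vol(\CB)$ translates. The executions differ slightly: the paper (which actually proves the more general Theorem~\ref{thm3}, of which Theorem~\ref{thm0} is the special case $\CB'=[0,p-1]^s$) first tiles $\mathbb{F}_p^r$ by a lattice of non-overlapping translates of $\CB$, observes that at most $O_r(p^n)$ lattice-boxes can meet $V$, and then repeats over all $\vol(\CB)$ cosets of the lattice. Your direct incidence count bypasses this lattice detour and is a bit cleaner, while reaching the identical bound. One small remark: the claim that $P-\CB$ contains \emph{exactly} $\vol(\CB)$ integer points assumes the $L_j$ are integers; in general the count is $\prod_j(L_j+O(1))\ll_r \vol(\CB)$ once each $L_j\geq 1$, which is harmless for the $O_r$ conclusion.
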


When $\vol(\CB)$ does not satisfy any of \eqref{eqnvolBsmall} or \eqref{eqnvolBlarge}, there are no general bounds available for $N_{\CB}(V)$, and the counting argument from Theorem \ref{thm0} does not work. Nevertheless, we are able to decrease the bound \eqref{eqnvolBlarge}, by showing that \eqref{eqnNBud} holds for almost all $\CB_{\mathbf{x}}$ when $\vol(\CB)$ becomes smaller. For some $V$, we can even show that unless
\begin{equation*}
\vol(\CB) \sim p^{r-n},
\end{equation*}
one always has that \eqref{eqnNBud} holds for almost all $\CB_{\mathbf{x}}$. The key 
result here is the following estimate for the second moment.

\begin{theorem}\label{thm1}
Let $p$ be a prime, and let $V$ be an absolutely irreducible affine variety in $\mathbb{A}^r_p$ of dimension $n$ and degree $d$. Let $\delta=\delta(V)$ be defined as in \eqref{defdelta}. Let $\CB$ be a box, then
\begin{equation*}
\sum_{\mathbf{x}\in\mathbf{F}_p^r} \abs{N_{\CB_{\mathbf{x}}}(V)-N(V)\cdot\frac{\vol(\CB)}{p^r}}^2 \ll_{r,n,d} p^{n+1+\delta}\vol(\CB).
\end{equation*}
\end{theorem}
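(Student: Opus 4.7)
The plan is a Fourier/orthogonality argument that exploits the translation-invariance of the sum over $\mathbf{x}$. Setting $\hat{\mathbf{1}}_{\CB}(\mathbf{u}) := \sum_{\mathbf{y}\in\CB} e_p(-\mathbf{u}\cdot\mathbf{y})$ and $S(V,\mathbf{u}) := \sum_{P\in V(\mathbb{F}_p)} e_p(\mathbf{u}\cdot P)$, Fourier inversion applied to $\mathbf{1}_{\CB_{\mathbf{x}}}(P)=\mathbf{1}_{\CB}(P-\mathbf{x})$, together with separation of the $\mathbf{u}=\mathbf{0}$ contribution (which equals the main term, since $\hat{\mathbf{1}}_{\CB}(\mathbf{0})=\vol(\CB)$ and $S(V,\mathbf{0})=N(V)$), yields
\begin{equation*}
N_{\CB_{\mathbf{x}}}(V) - N(V)\frac{\vol(\CB)}{p^r} = \frac{1}{p^r}\sum_{\mathbf{u}\neq\mathbf{0}} \hat{\mathbf{1}}_{\CB}(\mathbf{u})\, S(V,\mathbf{u})\, e_p(-\mathbf{u}\cdot\mathbf{x}).
\end{equation*}

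I would then take absolute values squared and sum over $\mathbf{x}\in\mathbb{F}_p^r$. Expanding the square and invoking the character orthogonality $\sum_{\mathbf{x}} e_p((\mathbf{v}-\mathbf{u})\cdot\mathbf{x}) = p^r\mathbf{1}_{\mathbf{u}=\mathbf{v}}$ to kill the off-diagonal cross terms, the second moment collapses to a diagonal expression
\begin{equation*}
\frac{1}{p^r}\sum_{\mathbf{u}\neq\mathbf{0}} |\hat{\mathbf{1}}_{\CB}(\mathbf{u})|^2\, |S(V,\mathbf{u})|^2.
\end{equation*}
To finish, I would factor out the uniform pointwise bound $|S(V,\mathbf{u})|\ll_{r,n,d} p^{(n+1+\delta)/2}$ valid for all $\mathbf{u}\neq\mathbf{0}$ --- this is the Katz/Deligne-type cohomological input on $X\cap L\cap H_{\mathbf{u}}$ that underlies Luo's estimate \eqref{eqnLuo} --- and apply Plancherel's identity $\sum_{\mathbf{u}} |\hat{\mathbf{1}}_{\CB}(\mathbf{u})|^2 = p^r\vol(\CB)$ to the Fourier coefficients. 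Multiplying these two ingredients gives $\frac{1}{p^r}\cdot p^{n+1+\delta}\cdot p^r\vol(\CB) = p^{n+1+\delta}\vol(\CB)$, exactly the claimed bound.

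The main obstacle is securing the uniform exponential sum bound $|S(V,\mathbf{u})|\ll p^{(n+1+\delta)/2}$ for every nonzero $\mathbf{u}$, with $\delta$ as defined in \eqref{defdelta}; this cannot be extracted from \eqref{eqnLuo} as a black box because \eqref{eqnLuo} has already paid a $\log^r p$ factor for the $\ell^1$ completion of intervals, so the proof must appeal to the underlying Katz-type bound directly. The conceptual gain of the second moment over a pointwise bound is precisely that Plancherel delivers $\ell^2$ rather than $\ell^1$ control of $\hat{\mathbf{1}}_{\CB}$: this eliminates the log factors and replaces them by a factor of $\vol(\CB)$, which is the saving that makes the resulting ``almost all'' statement nontrivial for small boxes.
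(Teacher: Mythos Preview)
Your proposal is correct and follows essentially the same approach as the paper. The paper proves the more general Theorem~\ref{thm4} (with an auxiliary polynomial map $\mathbf{g}$ and a target box $\CB'$) and then specializes to obtain Theorem~\ref{thm1}; restricted to that special case, its argument is exactly yours: expand $E_{\CB_{\mathbf{x}}}$ as a Fourier sum over $\mathbf{u}\neq\mathbf{0}$, square and sum over $\mathbf{x}$, use orthogonality to diagonalize, apply the Katz bound (Lemma~\ref{lemKatz}) uniformly to $|S(V,\mathbf{u})|$, and finish with what you call Plancherel---which the paper writes out explicitly by completing the sum to all $\mathbf{u}$ and applying orthogonality once more (its equation~\eqref{eqnS2}).
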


An immediate corollary of Theorem \ref{thm1} is a lower bound on $\vol(\CB)$ which guarantees that almost all $\CB_{\mathbf{x}}$ contain the expected number of points.
\begin{cor}\label{cor1}
Notations are as in Theorem \ref{thm1}. If the box $\CB$ satisfies
\begin{equation}\label{eqnvolBok}
\vol(\CB) = \Omega(p^{r-(n-1-\delta)}),
\end{equation}
then for all but $o(p^r)$ vectors $\mathbf{x}\in\mathbb{F}_p^r$, the box $\CB_{\mathbf{x}}$ satisfies
\begin{equation*}
N_{\CB_{\mathbf{x}}}(V)\sim N(V)\cdot\frac{\vol(\CB_{\mathbf{x}})}{p^r}.
\end{equation*}
\end{cor}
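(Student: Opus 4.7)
The plan is to deduce the corollary from Theorem \ref{thm1} by a standard Chebyshev-type second moment argument. First, I would observe that translation modulo $p$ preserves the volume of a box, so $\vol(\CB_{\mathbf{x}})=\vol(\CB)$ for every $\mathbf{x}\in\mathbb{F}_p^r$, and hence the target main term $N(V)\vol(\CB)/p^r$ is independent of $\mathbf{x}$. By the Lang--Weil bound \eqref{eqnLWfull} we have $N(V)\asymp p^n$, so the main term is of order $p^{n-r}\vol(\CB)$.

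For a parameter $\eta>0$ to be chosen, I would define the exceptional set
\begin{equation*}
E_\eta \;=\; \left\{\, \mathbf{x}\in\mathbb{F}_p^r \;:\; \abs{N_{\CB_{\mathbf{x}}}(V)-N(V)\tfrac{\vol(\CB)}{p^r}}\;\geq\;\eta\cdot N(V)\tfrac{\vol(\CB)}{p^r}\,\right\}.
\end{equation*}
A Chebyshev-type inequality combined with Theorem \ref{thm1} then yields
\begin{equation*}
\abs{E_\eta}\cdot\left(\eta\,N(V)\tfrac{\vol(\CB)}{p^r}\right)^2 \;\leq\; \sum_{\mathbf{x}\in\mathbb{F}_p^r}\abs{N_{\CB_{\mathbf{x}}}(V)-N(V)\tfrac{\vol(\CB)}{p^r}}^2 \;\ll\; p^{n+1+\delta}\vol(\CB),
\end{equation*}
so after using $N(V)\asymp p^n$ one obtains
\begin{equation*}
\abs{E_\eta} \;\ll\; \frac{p^{2r-n+1+\delta}}{\eta^2\,\vol(\CB)}.
\end{equation*}

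The hypothesis \eqref{eqnvolBok} says precisely that $\vol(\CB)/p^{r-(n-1-\delta)}\to\infty$, i.e.\ $p^{2r-n+1+\delta}/\vol(\CB)=o(p^r)$. Hence for every fixed $\eta>0$ one already has $\abs{E_\eta}=o(p^r)$. To upgrade this from ``relative error at most $\eta$'' to the $\sim$ asymptotic, I would use the standard diagonal trick: pick $\eta=\eta(p)\to 0$ slowly enough that still $\eta(p)^2\vol(\CB)/p^{r-(n-1-\delta)}\to\infty$ (for instance $\eta(p)^2$ equal to the square root of the reciprocal of this quotient). Then $\abs{E_{\eta(p)}}=o(p^r)$ and for every $\mathbf{x}\notin E_{\eta(p)}$ we get $N_{\CB_{\mathbf{x}}}(V)=N(V)\vol(\CB_{\mathbf{x}})/p^r\bigl(1+o(1)\bigr)$, as desired. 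There is no real obstacle once Theorem \ref{thm1} is in hand; the only point requiring a little care is the choice of $\eta(p)$, but this is routine.
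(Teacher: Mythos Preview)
Your proposal is correct and follows essentially the same Chebyshev-type second-moment argument as the paper (which proves the more general Corollary~\ref{cor4} from Theorem~\ref{thm4} in exactly this way and remarks that Corollary~\ref{cor1} is the special case $s=0$). Your explicit choice of a vanishing threshold $\eta(p)\to 0$ is slightly more careful than the paper's phrasing, but the underlying idea and the resulting bound $\abs{E}\ll p^{2r-n+1+\delta}/\vol(\CB)$ are identical.
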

In particular, if $\vol(\CB)$ satisfies \eqref{eqnvolBok}, then almost all boxes $\CB_{\mathbf{x}}$ contain a point of $V$. 

We may also recover a lower bound of $\vol(\CB)$ that guarantees $N_{\CB}(V)>0$. This allows us to remove the log factor in \eqref{eqnvolBlarge}. This corollary can also be obtained as a direct consequence of the recent work by Fourvy, Kowalski, and Michel \cite{FKM13} about the ``sliding-sum method''. 
\begin{cor}\label{cor1a}
Notations are as in Theorem \ref{thm1}. For any box $\CB$ that satisfies
\begin{equation}\label{eqnvolBlarge'}
\vol(\CB) = \Omega(p^{r-\frac{n-1-\delta}{2}}),
\end{equation}
we have $N_{\CB}(V)>0$.
\end{cor}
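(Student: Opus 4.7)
The plan is to deduce the corollary from the second-moment estimate of Theorem \ref{thm1} by a sub-box contradiction argument. The intuition is that if $\CB$ were to contain no points of $V$, then every sub-box of $\CB$ would also be empty, so a positive proportion of translates of a fixed sub-box would each contribute the full main term squared to the second moment, which would violate Theorem \ref{thm1} at the threshold \eqref{eqnvolBlarge'}.

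Concretely, I would suppose for contradiction that $N_{\CB}(V)=0$, and fix a sub-box $\CB_0 \subseteq \CB$ whose side lengths are each roughly half those of $\CB$, so that $\vol(\CB_0)\gg_r\vol(\CB)$. Letting $S$ be the set of $\mathbf{x}\in\mathbb{F}_p^r$ for which $\CB_{0,\mathbf{x}}\subseteq\CB$, an elementary lattice count gives $|S|\gg_r\vol(\CB)$, and every such translate satisfies $N_{\CB_{0,\mathbf{x}}}(V)=0$ since all its lattice points already lie in $\CB$. The Lang-Weil estimate \eqref{eqnLWfull} yields $N(V)\sim p^n$, so the main term $M_0:=N(V)\vol(\CB_0)/p^r$ is of size $\gg p^{n-r}\vol(\CB)$; in particular $M_0\to\infty$ with $p$, since the hypothesis \eqref{eqnvolBlarge'} certainly forces $\vol(\CB)=\Omega(p^{r-n})$ (because $\delta\le n-2$ implies $(n-1-\delta)/2 \le n/2 < n$). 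Restricting the sum in Theorem \ref{thm1} to $\mathbf{x}\in S$ therefore yields the lower bound $|S|\cdot M_0^2 \gg_r p^{2(n-r)}\vol(\CB)^3$.

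Comparing this with the upper bound $\ll_{r,n,d}p^{n+1+\delta}\vol(\CB_0) \ll_r p^{n+1+\delta}\vol(\CB)$ of Theorem \ref{thm1} gives $\vol(\CB)^2\ll p^{2r-n+1+\delta}$, i.e., $\vol(\CB)\ll p^{r-(n-1-\delta)/2}$, contradicting \eqref{eqnvolBlarge'}. The only delicate point in this approach is ensuring that $M_0$ is genuinely the dominant term on $S$ so that the rough lower bound $|S|\cdot M_0^2$ is legitimate and not swamped by the $o(p^n)$ error in Lang-Weil; this is exactly what the hypothesis \eqref{eqnvolBlarge'} supplies. Aside from this bookkeeping and careful tracking of the factors of $2^r$ from halving the sides, the argument is a direct amplification of Theorem \ref{thm1}.
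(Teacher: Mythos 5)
Your proof is correct and follows essentially the same approach as the paper's: halve each side of $\CB$, observe that $\gg_r\vol(\CB)$ non‑overlapping translates of this half‑box sit inside $\CB$ and are therefore empty, and feed this family into the second‑moment estimate of Theorem~\ref{thm1} to contradict \eqref{eqnvolBlarge'}. (One small remark: the ``delicate point'' you flag about $M_0\to\infty$ is not actually needed — once you restrict the sum to the empty translates, each term equals $M_0^2$ exactly, and the inequality $\vol(\CB)^2\ll p^{2r-n+1+\delta}$ falls out algebraically, which already contradicts the hypothesis.)
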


A series of remarks are in order.

\begin{remark}
Combining Theorem \ref{thm0} and Theorem \ref{thm1}, we see that if $\delta=-1$, then \eqref{eqnNBud} holds for almost all boxes $\CB_{\mathbf{x}}$, unless $\vol(\CB)\sim p^{n-r}$. This is true in particular when $V$ is a curve, or when $V$ is a smooth complete intersection (see \cite{Kat99}).
\end{remark}

\begin{remark}
When $V$ is the variety given by $x_1\ldots x_r=c$ over $\mathbb{F}_p$, we recover the result of Chan \cite{Cha11}, apart from the constants.
\end{remark}

\begin{remark}
When $\delta=-1$, Theorem \ref{thm1} is in some sense best possible, as the following example illustrates that one can get different probabilities for different varieties in the marginal case. Let $C\subseteq \mathbb{A}^2_p$ be the curve $y^{\ell}=f(x)$, where $f(x)$ is an $\ell$-th power free polynomial in $\mathbb{F}_p[x]$. Let $\CB$ be the box $[0,1)\times[0,p)$, then $\vol(\CB)=p$. It is easy to see that as $p$ tends to infinity, the probability of having
$\CB_{\mathbf{x}}\neq\emptyset$ is exactly $1/\ell$. Therefore, this probability depends on the degree of $C$. Such a phenomenon continues to exist for cyclic covers of $\mathbb{A}^n_p$ for any dimension $n$. We leave open the problem of describing, for a general variety, the possible connection
between the probability of having $\CB_{\mathbf{x}}\neq\emptyset$ and the degree of the variety.

Moreover, for a class of curves $C$ that includes the rational curves and the hyperelliptic curves, and for $\CB=(x,x+H]\times(\alpha p,\beta p]$ with $0\leq\alpha<\beta\leq 1$, the first author shows in \cite{Mak13} that the estimation in Theorem \ref{thm1} has the correct main term.
\end{remark}

\begin{remark}
In arithmetical terms, Theorem \ref{thm1} says that for any system of congruence equations
\begin{align*}
f_1(x_1,\ldots,x_r)&\equiv 0 \pmod{p}, \\
\vdots & \\
f_m(x_1,\ldots,x_r)&\equiv 0 \pmod{p},
\end{align*}
where $f_i\in\mathbb{Z}[x_1,\ldots,x_r]$ are polynomials of degree at least $2$, one can expect a solution with probability $1$ in any box $\CB$ of size satisfying \eqref{eqnvolBok}, as long as the above system defines a non-planar absolutely irreducible affine variety in the affine $r$-space over $\mathbb{F}_p$.
\end{remark}

An interesting question that arises is whether one can in principle provide applications of
Theorem 2  which are not obtainable directly by Weil's estimates. 
For example, applications where a given problem reduces to
study the average number of points on a variety inside a box as one varies the box,
and where averaging over the moving box, as done in the present paper, leads
to better results than examining each box individually.
We mention in this connection that in the process of studying  the distribution of 
fractional parts of $n^2\alpha$,
Rudnick, Sarnak and one of the authors
\cite{RSZ01} , \cite{Zah03} have been led to
consider the number of points on a family of curves defined modulo $p$ 
which lie inside certain boxes.
In those papers each curve and box were examined individually (via Weil's estimates),
because they vary in ranges that are too short for us to be able to take advantage
of this extra average. Nevertheless, this example, which initially came from a
problem in mathematical physics (see Rudnick and Sarnak \cite{RuSa98} and the 
references therein), shows that problems from unrelated fields
may sometime lead unexpectedly to questions of the type discussed in the present paper. From this perspective, the above results, and the more general results below, may prove useful in various contexts.

After showing that \eqref{eqnNBud} holds for smaller $\CB$ in an almost all sense, 
our next aim is to understand the action of a polynomial map on these rational points. Let $\mathbf{g}=(g_1,\ldots,g_s)$ be a polynomial map from $V$ to some $\mathbb{A}_p^s$, and let $\CB'$ be a box inside the target space $\mathbb{A}_p^s$. We may ask the more general question of how many points in $V\cap\CB$ are mapped to $\CB'$ via $\mathbf{g}$, and
further if one can improve the estimate on average. When $V$ is a curve, this problem has been studied by Vajaitu and one of the author \cite{VaZa02}, Granville, Shparlinski and one of the 
authors \cite{GSZ05}, and the authors \cite{MaZa12}. These types of questions have various
applications, for example to the residue race problem \cite{GSS06,GSZ05} and to the study of distances between an element $n$ and its multiplicative inverse $n'$ modulo $p$ \cite{FKSY05}.

As a first step in our approach to the general problem mentioned above, 
we generalize results of \cite{GSZ05, MaZa12, VaZa02} to the case of an affine variety $V$. Let $N_{\CB,\CB'}(V,\mathbf{g})$ be the number of points in $V\cap\CB$ that are mapped to $\CB'$ under $\mathbf{g}$. Similar to the case of curves, one may expect 
such points to be uniformly distributed in $\mathbb{A}_p^s$, i.e.
\begin{equation}\label{eqnNB'ud}
N_{\CB,\CB'}(V,\mathbf{g}) \sim N(V)\cdot\frac{\vol(\CB)}{p^r}\cdot\frac{\vol(\CB')}{p^s}.
\end{equation}
We show that under the assumption that the volumes of 
$\CB,\CB'$ are not too small, this is the case for \textit{all} such boxes, under some mild conditions on $\mathbf{g}$. Recall that a set of polynomial functions $S=\{f_1,\ldots,f_s\}$ is \textit{linearly independent on $V$} if there are no non-trivial linear combinations
\begin{equation*}
f:=c_1f_1+\ldots+c_sf_s, \qquad c_i\in\mathbb{F}_p,
\end{equation*}
that are identically zero on $V$.
\begin{theorem}\label{thm2}
Let $p$ be a prime, let $V$ be an absolutely irreducible affine variety in $\mathbb{A}^r_p$ of dimension $n$, defined by some equations in the $r$ variables $x_1,\ldots,x_r$, and let $\mathbf{g}=(g_1,\ldots,g_s)$ be a polynomial map from $V$ to $\mathbb{A}_p^s$. Let $\CB\subseteq\mathbb{A}_p^r$, $\CB'\subseteq\mathbb{A}_p^s$ be boxes, and write $\mathbf{p}=(x_1,\ldots,x_r)$. Let $\mathbf{u}=(u_1,\ldots,u_r)\in\mathbb{F}_p^r$ and $\mathbf{v}=(v_1,\ldots,v_s)\in\mathbb{F}_p^s$ be vectors, let \begin{equation*}
F(\mathbf{p};\mathbf{u},\mathbf{v})=u_1x_1+\ldots+u_rx_r+v_1g(\mathbf{p})+\ldots+v_sg(\mathbf{p}) 
\end{equation*}
and $\tilde{F}$ be its homogenization. Define
\begin{equation*}
H_{\mathbf{u},\mathbf{v}}=\{ \mathbf{x}: \tilde{F}(\mathbf{x})=0 \}
\end{equation*}
and
\begin{equation*}
\delta=\max_{(\mathbf{u},\mathbf{v})\neq(0,0)} \dim(\textup{Sing}(X\cap L\cap H_{\mathbf{u},\mathbf{v}})).
\end{equation*}
If the set $\{1,x_1,\ldots,x_r,g_1(\mathbf{p}),\ldots,g_s(\mathbf{p})\}$ is linearly independent on $V$, then
\begin{equation*}
N_{\CB,\CB'}(V,\mathbf{g}) = N(V)\cdot\frac{\vol(\CB)}{p^r}\cdot\frac{\vol(\CB')}{p^s}+O(p^{\frac{n+1+\delta}{2}}\log^{r+s}p).
\end{equation*}
\end{theorem}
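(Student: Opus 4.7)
The plan is to reduce the counting problem to bounds on exponential sums over $V$ via Fourier expansion of the indicator functions of $\CB$ and $\CB'$, and then apply a Luo-type estimate to the resulting character sums. Write
\[
N_{\CB,\CB'}(V,\mathbf{g}) \;=\; \sum_{\mathbf{p}\in V(\mathbb{F}_p)} \mathbf{1}_{\CB}(\mathbf{p})\, \mathbf{1}_{\CB'}(\mathbf{g}(\mathbf{p})),
\]
and expand each factor in the standard Fourier basis: for an interval $\CI\subseteq[0,p)$,
\[
\mathbf{1}_{\CI}(x) = \frac{1}{p}\sum_{a\in\mathbb{F}_p}\hat{\mathbf{1}}_{\CI}(a)\,e_p(ax),
\]
with $e_p(z)=\exp(2\pi i z/p)$ and coefficients satisfying the standard bound $|\hat{\mathbf{1}}_{\CI}(a)|\ll \min(|\CI|,\, p/(|a|+1))$. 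Substituting this for each of the $r+s$ intervals produces
\[
N_{\CB,\CB'}(V,\mathbf{g}) = \frac{1}{p^{r+s}}\sum_{(\mathbf{u},\mathbf{v})\in\mathbb{F}_p^{r+s}} \hat{\mathbf{1}}_{\CB}(\mathbf{u})\,\hat{\mathbf{1}}_{\CB'}(\mathbf{v})\,S(\mathbf{u},\mathbf{v}),
\]
where $S(\mathbf{u},\mathbf{v})=\sum_{\mathbf{p}\in V}e_p(F(\mathbf{p};\mathbf{u},\mathbf{v}))$ is the character sum attached to the function $F$ in the hypothesis.

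Next, isolate the main term. The contribution from $(\mathbf{u},\mathbf{v})=(0,0)$ is precisely
\[
\frac{1}{p^{r+s}}\,\vol(\CB)\,\vol(\CB')\,N(V),
\]
which gives the predicted main term. The remaining task is to bound the total contribution of $(\mathbf{u},\mathbf{v})\neq(0,0)$. For such $(\mathbf{u},\mathbf{v})$, the linear independence hypothesis on $\{1,x_1,\dots,x_r,g_1,\dots,g_s\}$ guarantees that $F(\mathbf{p};\mathbf{u},\mathbf{v})$ is not constant on $V$, so its homogenization $\tilde F$ cuts out a proper hypersurface section, and the geometric setup defining $\delta$ makes sense. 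Applying Luo's estimate (exactly in the form that produced \eqref{eqnLuo}, but with the hyperplane $H_{\mathbf{u}}$ replaced by the hypersurface $H_{\mathbf{u},\mathbf{v}}$) yields
\[
|S(\mathbf{u},\mathbf{v})| \ll_{r,n,d} p^{\frac{n+1+\delta}{2}}
\qquad\text{for all } (\mathbf{u},\mathbf{v})\neq(0,0).
\]

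To assemble everything, pull this uniform bound out of the sum and use the standard estimate
\[
\sum_{\mathbf{u}\in\mathbb{F}_p^r}|\hat{\mathbf{1}}_{\CB}(\mathbf{u})| \ll p\log^{r}p,
\qquad
\sum_{\mathbf{v}\in\mathbb{F}_p^s}|\hat{\mathbf{1}}_{\CB'}(\mathbf{v})| \ll p\log^{s}p,
\]
which comes from multiplying the one-dimensional $L^1$ bounds on Fourier coefficients of indicator functions of intervals. The factor $1/p^{r+s}$ in front then leaves an error of size $O(p^{(n+1+\delta)/2}\log^{r+s}p)$, matching the statement.

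The main obstacle is the second step: justifying that Luo's exponential sum bound \eqref{eqnLuo} applies verbatim to $F(\mathbf{p};\mathbf{u},\mathbf{v})$, which is no longer a linear form but a polynomial of the same shape modulo the geometric data encoded in $\tilde F$. One has to check that the relevant hypotheses of Katz's framework hold for $X\cap L\cap H_{\mathbf{u},\mathbf{v}}$ -- in particular that this intersection has dimension $n-2$ for every nonzero $(\mathbf{u},\mathbf{v})$, which is where the linear independence hypothesis on $\{1,x_1,\dots,x_r,g_1,\dots,g_s\}$ is used in an essential way (it prevents $\tilde F$ from vanishing on any component of $X\cap L$, so the section is proper). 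Once that geometric verification is in place, the rest of the argument is a routine Fourier-plus-Weil assembly.
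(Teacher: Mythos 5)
Your proposal matches the paper's proof essentially step-for-step: Fourier-expand the indicator functions of $\CB$ and $\CB'$, isolate the $(\mathbf{u},\mathbf{v})=(0,0)$ main term $\vol(\CB)\vol(\CB')N(V)/p^{r+s}$, bound the exponential sum $S(\mathbf{u},\mathbf{v})$ over $V$ by $O(p^{(n+1+\delta)/2})$ for nonzero $(\mathbf{u},\mathbf{v})$, and close with the $L^1$ bound on the interval Fourier coefficients (the paper's Lemma~\ref{lem2}). The only inaccuracy is in labeling: the exponential sum bound is Katz's Lemma~\ref{lemKatz}, which is stated for an arbitrary polynomial $f$ on $V$ (not only linear forms), so the ``main obstacle'' you flag about extending \eqref{eqnLuo} to the nonlinear $F(\mathbf{p};\mathbf{u},\mathbf{v})$ is already absorbed into the lemma; the only thing to check is the dimension-$(n-2)$ condition for $X\cap L\cap H_{\mathbf{u},\mathbf{v}}$, which, as you correctly observe, follows from the linear independence hypothesis.
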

In particular, this theorem implies that every pair of boxes $\CB, \CB'$ with
\begin{equation}\label{eqnvolB'large}
\vol(\CB)\vol(\CB')=\Omega(p^{r+s-\frac{n-1-\delta}{2}}\log^{r+s}{p})
\end{equation}
contains the expected number of points on $V\cap\CB$ that are mapped into $\CB'$.

\begin{remark}
In Theorem \ref{thm2}, the assumption of linear independence of the set
\[\{1,x_1,\ldots,x_r,g_1(\mathbf{p}),\ldots,g_s(\mathbf{p})\}\]
is necessary. As an example, let $C\subseteq \mathbb{A}^2_p$ be the elliptic curve $y^{2}=x^3+x$, and let $\mathbf{g}(x,y)=(x^3+x,y^2)$, then the image lies inside the diagonal of $\mathbb{A}_p^2$. So if we take $\CB=[0,p-1]^2$ and $\CB'=[0,(p-1)/2-1]\times[(p-1)/2+1,p-1]$, then even though $\vol(\CB)\vol(\CB')$ is of order $p^4/4$, we have $N_{\CB,\CB'}(C,\mathbf{g})=0$ since $\CB'$ does not intersect the diagonal.

As an example which shows why the linear independence of the $x_i$ is needed, 
let $\mathbf{h}(x,y)=x$, and let $\CB=[0,(p-1)/2]\times[0,p-1]$, $\CB'=[(p-1)/2+1,p-1]$. Then $N_{\CB,\CB'}(C,\mathbf{h})=0$ since the $x$-coordinates of $\CB$ and $\CB'$ do not overlap.
\end{remark}

Next, we allow the boxes $\CB$ and $\CB'$ to move around the domain and 
range respectively, and prove that \eqref{eqnNB'ud} holds for smaller boxes 
in an almost all sense.
If $\vol(\CB)\vol(\CB')$ is too small, then $N_{\CB,\CB'}(V,\mathbf{g})=0$ for almost all boxes $\CB$, $\CB'$, and hence the expected number of points \eqref{eqnNB'ud} is correct with probability one.
\begin{theorem}\label{thm3}
Let $p$ be a prime, let $V$ be an absolutely irreducible affine variety in $\mathbb{A}^r_p$ of dimension $n$, and let $\mathbf{g}:V\rightarrow\mathbb{A}_p^s$ be a polynomial function on $V$. Let $\CB\subseteq\mathbb{A}^r_p$, $\CB'\subseteq\mathbb{A}^s_p$ be boxes whose volumes satisfy
\begin{equation}\label{eqnvolB'small}
\vol(\CB)\vol(\CB') = O(p^{r+s-n}),
\end{equation}
then for all $(\mathbf{x},\mathbf{y})\in\mathbb{F}_p^r\times\mathbb{F}_p^s$ with at most $O_{r,s}(p^n\vol(\CB)\vol(\CB'))$ possible exceptions, we have $N_{\CB_{\mathbf{x}},\CB'_{\mathbf{y}}}(V,\mathbf{g})=0$.
\end{theorem}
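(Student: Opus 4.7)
The plan is to execute a first-moment (Markov-style) counting argument that generalizes the proof of Theorem \ref{thm0} to the presence of the polynomial map $\mathbf{g}$ and the target box $\CB'$. Since $N_{\CB_{\mathbf{x}},\CB'_{\mathbf{y}}}(V,\mathbf{g})$ is a nonnegative integer, every pair $(\mathbf{x},\mathbf{y})$ on which it fails to vanish contributes at least $1$ to the total sum
\begin{equation*}
S := \sum_{\mathbf{x}\in\mathbb{F}_p^r}\sum_{\mathbf{y}\in\mathbb{F}_p^s} N_{\CB_{\mathbf{x}},\CB'_{\mathbf{y}}}(V,\mathbf{g}).
\end{equation*}
Hence it suffices to show $S \ll_{r,s} p^n \vol(\CB)\vol(\CB')$, after which the number of exceptional pairs is bounded by this same quantity.

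To evaluate $S$, I would expand $N_{\CB_{\mathbf{x}},\CB'_{\mathbf{y}}}(V,\mathbf{g})$ as the sum of indicator-function products $\mathbf{1}[P\in\CB_{\mathbf{x}}]\,\mathbf{1}[\mathbf{g}(P)\in\CB'_{\mathbf{y}}]$ over the $\mathbb{F}_p$-points $P$ of $V$, then interchange the order of summation. Because $\CB_{\mathbf{x}}$ is the cyclic translate of $\CB$ inside $\mathbb{F}_p^r$, for each fixed $P$ one has $\#\{\mathbf{x}\in\mathbb{F}_p^r:P\in\CB_{\mathbf{x}}\}=\vol(\CB)$ exactly, and similarly $\#\{\mathbf{y}\in\mathbb{F}_p^s:\mathbf{g}(P)\in\CB'_{\mathbf{y}}\}=\vol(\CB')$. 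Therefore
\begin{equation*}
S = N(V)\cdot\vol(\CB)\cdot\vol(\CB'),
\end{equation*}
and the Lang--Weil estimate \eqref{eqnLWfull} yields $N(V)\ll_{n,d} p^n$, which gives the required bound on $S$.

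Consequently, the number of pairs $(\mathbf{x},\mathbf{y})\in\mathbb{F}_p^r\times\mathbb{F}_p^s$ with $N_{\CB_{\mathbf{x}},\CB'_{\mathbf{y}}}(V,\mathbf{g})\neq 0$ is at most $S\ll_{r,s} p^n\vol(\CB)\vol(\CB')$, and this is $o(p^{r+s})$ whenever $\vol(\CB)\vol(\CB')=o(p^{r+s-n})$, matching the ``almost all'' interpretation. The only subtle point, and hence the closest thing to an obstacle, is observing that the inner count $\#\{\mathbf{x}:P\in\CB_{\mathbf{x}}\}$ is independent of $P$ precisely because translation in $\mathbb{F}_p^r$ acts freely; without the wrap-around convention this uniformity would fail near the boundary of $[0,p-1]^r$. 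No analytic or geometric information about $\mathbf{g}$ beyond its being a well-defined map $V(\mathbb{F}_p)\to\mathbb{F}_p^s$ is used, so no linear independence hypothesis analogous to the one in Theorem \ref{thm2} is needed.
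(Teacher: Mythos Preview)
Your argument is correct and reaches the same bound, but the route differs from the paper's. The paper proves Theorem~\ref{thm3} by a tiling argument: it fixes lattices $L\subseteq\mathbb{F}_p^r$, $L'\subseteq\mathbb{F}_p^s$ so that the translates $\CB_{\mathbf{x}}$, $\CB'_{\mathbf{y}}$ for $(\mathbf{x},\mathbf{y})\in L\times L'$ are essentially disjoint (with at most $2^{r+s}$-fold overlap at the torus boundary), observes that the $O(p^n)$ points of $V$ can therefore meet at most $O_{r,s}(p^n)$ of these lattice boxes, and then repeats over all $\vol(\CB)\vol(\CB')$ cosets of $L\times L'$. Your approach bypasses the tiling entirely: summing $N_{\CB_{\mathbf{x}},\CB'_{\mathbf{y}}}(V,\mathbf{g})$ over \emph{all} $(\mathbf{x},\mathbf{y})$ and interchanging with the sum over $P\in V(\mathbb{F}_p)$ gives the exact identity $S=N(V)\vol(\CB)\vol(\CB')$, after which Markov's inequality for the $\{0,1,2,\dots\}$-valued random variable $N_{\CB_{\mathbf{x}},\CB'_{\mathbf{y}}}$ finishes. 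The two arguments are morally the same double-count, but yours is cleaner and avoids the boundary bookkeeping; it also makes transparent that the bound holds with an essentially exact constant rather than one that depends on $2^{r+s}$. Your remark that no hypothesis on $\mathbf{g}$ is needed matches the paper's own observation following the theorem.
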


\begin{remark}
Note that one does not need to assume linear independence of the coordinate functions of $\mathbf{g}$ in Theorem \ref{thm3}.
\end{remark}

As with $N_{\CB}(V)$, we decrease the bound on $\vol(\CB)\vol(\CB')$ which guarantees that \eqref{eqnNB'ud} holds for almost all $\CB_{\mathbf{x}}$ and $\CB'_{\mathbf{y}}$.
\begin{theorem}\label{thm4}
Under the same notations and assumptions of Theorem \ref{thm2}, we have
\begin{equation*}
\sum_{\mathbf{x}\in\mathbf{F}_p^r}\sum_{\mathbf{y}\in\mathbf{F}_p^s} \abs{N_{\CB_{\mathbf{x}},\CB'_{\mathbf{y}}}(V)-N(V)\cdot\frac{\vol(\CB)}{p^r}\cdot\frac{\vol(\CB')}{p^s}}^2 \ll_{r,s,n,d} p^{n+1+\delta}\vol(\CB)\vol(\CB').
\end{equation*}
\end{theorem}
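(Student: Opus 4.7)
The plan is to mimic the proof of Theorem \ref{thm1} in the joint Fourier setting on $\mathbb{F}_p^{r}\times\mathbb{F}_p^{s}$, using additive characters to resolve the indicator functions of the two translated boxes and then exploiting orthogonality.

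First, I would expand
\[
\mathbf{1}_{\CB_{\mathbf{x}}}(\mathbf{p})=\frac{1}{p^{r}}\sum_{\mathbf{u}\in\mathbb{F}_p^r}\widehat{\mathbf{1}}_{\CB}(\mathbf{u})\,e_p\bigl(\mathbf{u}\cdot(\mathbf{p}-\mathbf{x})\bigr),
\]
and analogously $\mathbf{1}_{\CB'_{\mathbf{y}}}(\mathbf{g}(\mathbf{p}))$ with frequency $\mathbf{v}\in\mathbb{F}_p^s$. Substituting into $N_{\CB_{\mathbf{x}},\CB'_{\mathbf{y}}}(V,\mathbf{g})=\sum_{\mathbf{p}\in V}\mathbf{1}_{\CB_{\mathbf{x}}}(\mathbf{p})\mathbf{1}_{\CB'_{\mathbf{y}}}(\mathbf{g}(\mathbf{p}))$ and peeling off $(\mathbf{u},\mathbf{v})=(0,0)$ as the main term $N(V)\vol(\CB)\vol(\CB')/p^{r+s}$, the error becomes
\[
E(\mathbf{x},\mathbf{y})=\frac{1}{p^{r+s}}\sum_{(\mathbf{u},\mathbf{v})\neq(0,0)}\widehat{\mathbf{1}}_{\CB}(\mathbf{u})\widehat{\mathbf{1}}_{\CB'}(\mathbf{v})\,S(\mathbf{u},\mathbf{v})\,e_p(-\mathbf{u}\cdot\mathbf{x}-\mathbf{v}\cdot\mathbf{y}),
\]
where $S(\mathbf{u},\mathbf{v})=\sum_{\mathbf{p}\in V(\mathbb{F}_p)}e_p(F(\mathbf{p};\mathbf{u},\mathbf{v}))$ is exactly the exponential sum on $V$ controlled by the Katz--Luo machinery used to prove Theorem \ref{thm2}.

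Next, I would square $E(\mathbf{x},\mathbf{y})$, sum over all $(\mathbf{x},\mathbf{y})\in\mathbb{F}_p^{r+s}$, and invoke orthogonality of additive characters to kill every cross-frequency pair. The quadruple sum collapses to its diagonal $(\mathbf{u}_1,\mathbf{v}_1)=(\mathbf{u}_2,\mathbf{v}_2)$, contributing a factor $p^{r+s}$ and leaving
\[
\sum_{\mathbf{x},\mathbf{y}}|E(\mathbf{x},\mathbf{y})|^{2}=\frac{1}{p^{r+s}}\sum_{(\mathbf{u},\mathbf{v})\neq(0,0)}|\widehat{\mathbf{1}}_{\CB}(\mathbf{u})|^{2}|\widehat{\mathbf{1}}_{\CB'}(\mathbf{v})|^{2}|S(\mathbf{u},\mathbf{v})|^{2}.
\]
Applying the uniform Weil-type bound $|S(\mathbf{u},\mathbf{v})|\ll_{r,s,n,d}p^{(n+1+\delta)/2}$ from the proof of Theorem \ref{thm2}, which the linear-independence hypothesis on $\{1,x_1,\ldots,x_r,g_1,\ldots,g_s\}$ makes available for every $(\mathbf{u},\mathbf{v})\neq(0,0)$, one pulls the $p^{n+1+\delta}$ out and uses Parseval separately in the two frequency groups,
\[
\sum_{\mathbf{u}}|\widehat{\mathbf{1}}_{\CB}(\mathbf{u})|^{2}=p^{r}\vol(\CB),\qquad\sum_{\mathbf{v}}|\widehat{\mathbf{1}}_{\CB'}(\mathbf{v})|^{2}=p^{s}\vol(\CB'),
\]
which immediately yields the stated bound $p^{n+1+\delta}\vol(\CB)\vol(\CB')$.

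The main obstacle is not the Fourier bookkeeping, which runs parallel to the proof of Theorem \ref{thm1}, but the requirement that the pointwise estimate on $S(\mathbf{u},\mathbf{v})$ holds with the \emph{single} worst-case exponent $\delta$ uniformly over all nonzero $(\mathbf{u},\mathbf{v})$. This is exactly what the Katz--Luo framework behind Theorem \ref{thm2} supplies, with the linear-independence assumption inherited from there ensuring that the relevant polynomial $F(\mathbf{p};\mathbf{u},\mathbf{v})$ never degenerates to a constant on $V$. Once that uniform input is in hand, the orthogonality-plus-Parseval calculation above closes out the second-moment estimate.
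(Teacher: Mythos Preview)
Your proof is correct and follows essentially the same approach as the paper: expand the error via additive characters, square and sum over $(\mathbf{x},\mathbf{y})$, use orthogonality to collapse to the diagonal $(\mathbf{u}_1,\mathbf{v}_1)=(\mathbf{u}_2,\mathbf{v}_2)$, apply the uniform Katz bound on $|S(\mathbf{u},\mathbf{v})|$, and then sum $|\widehat{\mathbf{1}}_{\CB}|^2|\widehat{\mathbf{1}}_{\CB'}|^2$. Where the paper re-expands this last sum and applies orthogonality in $(\mathbf{u},\mathbf{v})$ once more, you simply invoke Parseval, which is the same computation said more concisely.
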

\begin{cor}\label{cor4}
Notations and assumptions are as in Theorem \ref{thm2}. If
\begin{equation}\label{eqnvolB'ok}
\vol(\CB)\vol(\CB')=\Omega(p^{r+s-(n-1-\delta)}),
\end{equation}
then
\begin{equation*}
N_{\CB_{\mathbf{x}},\CB'_{\mathbf{y}}}(V,\mathbf{g}) \sim N(V)\cdot\frac{\vol(\CB)}{p^r}\cdot\frac{\vol(\CB')}{p^s}
\end{equation*}
holds for all but $o(p^{r+s})$ vectors $\mathbf{x}$, $\mathbf{y}$.
\end{cor}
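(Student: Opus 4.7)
The plan is to derive Corollary \ref{cor4} as a routine consequence of the variance estimate of Theorem \ref{thm4} via Chebyshev's inequality. Write
\[
M := N(V)\cdot\frac{\vol(\CB)}{p^r}\cdot\frac{\vol(\CB')}{p^s}.
\]
By the Lang-Weil bound \eqref{eqnLWfull}, $N(V) = p^n(1+o(1))$, so $M \sim p^{n-r-s}\vol(\CB)\vol(\CB')$. Under the hypothesis \eqref{eqnvolB'ok}, this gives $M = \Omega(p^{1+\delta})$, and in particular $M\to\infty$.

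Next, for a parameter $\varepsilon = \varepsilon(p) > 0$ to be chosen, call a pair $(\mathbf{x},\mathbf{y}) \in \mathbb{F}_p^r\times\mathbb{F}_p^s$ \emph{bad} if
\[
\abs{N_{\CB_{\mathbf{x}},\CB'_{\mathbf{y}}}(V,\mathbf{g}) - M} \geq \varepsilon M.
\]
By Chebyshev's inequality applied to Theorem \ref{thm4}, the number of bad pairs is at most
\[
\frac{1}{\varepsilon^2 M^2}\sum_{\mathbf{x},\mathbf{y}} \abs{N_{\CB_{\mathbf{x}},\CB'_{\mathbf{y}}}(V,\mathbf{g}) - M}^2 \ll_{r,s,n,d} \frac{p^{n+1+\delta}\vol(\CB)\vol(\CB')}{\varepsilon^2 M^2}.
\]
Substituting $M \sim p^{n-r-s}\vol(\CB)\vol(\CB')$, this upper bound simplifies to
\[
O\!\left(\frac{p^{r+s}\cdot p^{r+s-(n-1-\delta)}}{\varepsilon^2\,\vol(\CB)\vol(\CB')}\right).
\]

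By the hypothesis \eqref{eqnvolB'ok}, the ratio $p^{r+s-(n-1-\delta)}/(\vol(\CB)\vol(\CB'))$ tends to zero as $p\to\infty$. Hence we may choose $\varepsilon = \varepsilon(p)\to 0$ slowly enough that $\varepsilon^{-2}\cdot p^{r+s-(n-1-\delta)}/(\vol(\CB)\vol(\CB')) \to 0$ still holds; this gives $o(p^{r+s})$ bad pairs. For every pair which is not bad, we have $N_{\CB_{\mathbf{x}},\CB'_{\mathbf{y}}}(V,\mathbf{g}) = (1+O(\varepsilon))M$, which is precisely the claimed asymptotic. Since the substantive content (the second moment bound) is furnished by Theorem \ref{thm4}, there is no real obstacle here; the only minor point requiring care is the simultaneous choice of $\varepsilon(p)$ together with the function implicit in $\Omega(\cdot)$, which is handled by a standard diagonalization: take $\varepsilon(p) = \big(p^{r+s-(n-1-\delta)}/(\vol(\CB)\vol(\CB'))\big)^{1/4}$, for instance.
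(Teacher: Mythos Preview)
Your argument is correct and is essentially the same Chebyshev-type deduction from Theorem~\ref{thm4} that the paper gives: both bound the size of the exceptional set by dividing the second-moment estimate by the square of the main term. Your version is in fact a bit more careful, making explicit the threshold $\varepsilon(p)\to 0$ that is only implicit in the paper's phrase ``since \eqref{eqnNB'ud} does not hold''.
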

As in Corollary \ref{cor1a}, we can obtain a lower bound for $\vol(\CB)\vol(\CB')$ that guarantee $N_{\CB,\CB'}(V,\mathbf{g})>0$. This allows us to remove the log factor in \eqref{eqnvolB'large}.
\begin{cor}\label{cor4a}
Notations and assumptions are as in Theorem \ref{thm2}. For all $\CB, \CB'$ with
\begin{equation}\label{eqnvolB'large'}
\vol(\CB)\vol(\CB')=\Omega(p^{r+s-\frac{n-1-\delta}{2}}),
\end{equation}
we have
\begin{equation*}
N_{\CB,\CB'}(V,\mathbf{g})>0.
\end{equation*}
\end{cor}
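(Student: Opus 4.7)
The plan is to run the Fourier-analytic proof of Theorem \ref{thm2} against smoothed minorants of $1_{\CB}$ and $1_{\CB'}$ instead of the sharp indicators, the sole purpose of the smoothing being to remove the $\log^{r+s}p$ loss present in Theorem \ref{thm2} that comes from summing $|\widehat{1_{\CB}}(\mathbf{h})|$ over all nonzero frequencies.

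First I would construct nonnegative minorants $\psi^-\leq 1_{\CB}$ on $\mathbb{F}_p^r$ and $\phi^-\leq 1_{\CB'}$ on $\mathbb{F}_p^s$, realized as tensor products of one-dimensional minorants produced either by a Beurling--Selberg / Vaaler construction, or by iterating the convolution of an indicator with a narrow averaging kernel, so that
\[\widehat{\psi^-}(\mathbf{0})=\vol(\CB)/p^r\cdot(1+o(1)),\qquad \sum_{\mathbf{h}\neq\mathbf{0}}|\widehat{\psi^-}(\mathbf{h})|\ll_r 1,\]
and analogously for $\phi^-$. Plancherel then yields
\[N_{\CB,\CB'}(V,\mathbf{g})\;\geq\;\sum_{\mathbf{v}\in V}\psi^-(\mathbf{v})\phi^-(\mathbf{g}(\mathbf{v}))\;=\;\sum_{(\mathbf{h},\mathbf{k})}\widehat{\psi^-}(\mathbf{h})\widehat{\phi^-}(\mathbf{k})\,S(\mathbf{h},\mathbf{k}),\]
with $S(\mathbf{h},\mathbf{k})$ the exponential sum over $V$ attached to the linear form $F(\mathbf{p};\mathbf{h},\mathbf{k})$ that already appears in the proof of Theorem \ref{thm2}.

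The $(\mathbf{h},\mathbf{k})=(\mathbf{0},\mathbf{0})$ term equals $N(V)\vol(\CB)\vol(\CB')/p^{r+s}\cdot(1+o(1))$, which by \eqref{eqnLWfull} together with the hypothesis \eqref{eqnvolB'large'} is $\Omega(p^{(n+1+\delta)/2})$. For each nonzero $(\mathbf{h},\mathbf{k})$, linear independence of $\{1,x_1,\ldots,x_r,g_1,\ldots,g_s\}$ on $V$ keeps $F$ non-constant on $V$, so the Katz--Luo bound underlying Theorem \ref{thm2} gives $|S(\mathbf{h},\mathbf{k})|\ll p^{(n+1+\delta)/2}$. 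Multiplying by the $\ell^1$-bound on the Fourier coefficients makes the total contribution of the nonzero frequencies $O(p^{(n+1+\delta)/2})$, crucially without any logarithmic factor; the right-hand side is then strictly positive and hence $N_{\CB,\CB'}(V,\mathbf{g})>0$.

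The main obstacle is producing minorants whose nonzero Fourier coefficients have $\ell^1$-norm genuinely $O(1)$: a single trapezoidal smoothing $\psi_i^-=1_{\CI_i'}\ast k$ with a narrow kernel $k$ of width $c$ still costs a $\log(L_i/c)$ factor, coming from the frequency band $p/L_i\ll|h|\ll p/c$ where $|\widehat{\psi_i^-}(h)|$ behaves like $|h|^{-1}$. Iterating the convolution (equivalently, taking a $C^1$ window) forces $|\widehat{\psi_i^-}(h)|\ll|h|^{-2}$ in that range, so the tail sum converges to a constant. As already indicated in Corollary \ref{cor1a}, a more conceptual alternative is the sliding-sum method of Fouvry, Kowalski and Michel \cite{FKM13}, which extracts the log-free estimate by averaging the relevant exponential sums over short translates rather than by constructing explicit smooth minorants.
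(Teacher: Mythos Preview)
Your smoothing approach is valid but differs from the paper's. The paper deduces the corollary directly from the second-moment bound of Theorem~\ref{thm4} by a sliding-box argument: if $N_{\CB,\CB'}(V,\mathbf{g})=0$, then every pair of half-sized sub-boxes $\tfrac12\CB_{\mathbf{x}}\subseteq\CB$, $\tfrac12\CB'_{\mathbf{y}}\subseteq\CB'$ also has count zero, and there are $\asymp\vol(\CB)\vol(\CB')$ such pairs, each contributing $\bigl(N(V)\vol(\CB)\vol(\CB')/(2p)^{r+s}\bigr)^2$ to the variance; Theorem~\ref{thm4} then forces $\vol(\CB)\vol(\CB')=O(p^{r+s-(n-1-\delta)/2})$. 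This is precisely the sliding-sum mechanism of \cite{FKM13} that you mention at the end, and it makes the corollary a two-line consequence of the paper's own main theorem, with no minorants to construct.

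Your direct-minorant route works as well, but one detail in the construction is off. Iterating the convolution with a narrow kernel $k$ of width $c$ does \emph{not} improve the decay in the band $p/L_i\ll|h|\ll p/c$: there $\widehat{k}(h)\approx1$, so $|\widehat{\psi_i^-}(h)|\asymp|\widehat{1_{\CI_i'}}(h)|\asymp|h|^{-1}$ and the $\log(L_i/c)$ persists regardless of how many times you convolve. What actually yields $\sum_{h\neq0}|\widehat{\psi_i^-}(h)|=O(1)$ is taking the smoothing width comparable to $L_i$ itself, e.g.\ $\psi_i^-(m)=\chi\bigl((m-L_i/2)/L_i\bigr)$ for a fixed smooth bump $\chi$ supported in $[-\tfrac12,\tfrac12]$; two summations by parts then give $|\widehat{\psi_i^-}(h)|\ll(L_i/p)\min\bigl(1,(p/(L_i|h|))^2\bigr)$, whose $\ell^1$-sum over $h\neq0$ is $O(1)$. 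The price is that $\widehat{\psi^-}(\mathbf{0})=c_0\,\vol(\CB)/p^r$ with $c_0=(\int\chi)^r<1$ rather than $(1+o(1))\vol(\CB)/p^r$, but a fixed constant loss in the main term is harmless for positivity under the hypothesis \eqref{eqnvolB'large'}.
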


When $\CB'$ is the full cube
$[0,p-1]^s$ it is clear that we can remove all the assumptions for $\mathbf{g}$. In that case Theorem \ref{thm2} reduces to Luo's result \eqref{eqnLuo}, Theorem \ref{thm3} reduces to Theorem \ref{thm0}, and Theorem \ref{thm4} reduces to Theorem \ref{thm1}. We will therefore proceed directly to the proofs of Theorems \ref{thm2}, \ref{thm3} and \ref{thm4}.

\section{Lemmas on exponential sums}

In this section we recall two lemmas on exponential sums which will be useful later.
Let $e_p(x)=e^{2\pi ix/p}$. Assume $\dim V\geq 2$, recall that $X$ is the homogenization of $V$ using the variable $x_0$, and $L$, $H_{\mathbf{u}}$ are defined by \eqref{defL} and \eqref{defH} respectively. Let $\delta_{\mathbf{u}}$ denote the dimension of the singular locus of $X\cap L\cap H_{\mathbf{u}}$. One has the following estimate of exponential sums in terms of $\delta_{\mathbf{u}}$ (see  Katz \cite[Theorem 5]{Kat99}).

\begin{lemma}\label{lemKatz}
Let $V\subseteq\mathbb{A}_p^r$ be an irreducible affine variety over $\mathbb{F}_p$ of dimension $n$ and degree $d$, not contained in any hyperplane, and let $X$ be its homogenization. Let $f\in\mathbb{F}_p[x_1,\ldots,x_r]$ be a polynomial, and let $\tilde{f}$ be its homogenization. Let $L$ be as in \eqref{defL}, and define
\begin{equation}\label{defH1}
P = \{ \mathbf{x}\in X | \tilde{f}(\mathbf{x})=0 \}.
\end{equation}
Suppose that $X\cap L\cap P$ has dimension $n-2$, and let $\delta$ be the dimension of its singular locus, then
\begin{equation}\label{eqnKatz}
\abs{\sum_{\mathbf{x}\in V}e_p(f(\mathbf{x}))}\leq (4d+9)^{n+r}p^{\frac{n+1+\delta}{2}}.
\end{equation}
\end{lemma}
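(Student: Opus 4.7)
The plan is to follow the Hooley--Katz framework, reducing the exponential sum bound to weight and dimension estimates for $\ell$-adic cohomology. Fix a nontrivial additive character $\psi(x) = e_p(x)$ and let $\mathcal{L}_\psi$ denote the associated Artin--Schreier sheaf on $\mathbb{A}^1_{\bar{\mathbb{F}}_p}$. Pulling back along $f$ yields a lisse rank-one sheaf $\mathcal{F} := f^*\mathcal{L}_\psi$ on $V$, and the Grothendieck--Lefschetz trace formula gives
\[
\sum_{\mathbf{x}\in V(\mathbb{F}_p)} e_p(f(\mathbf{x})) = \sum_{i=0}^{2n} (-1)^i \mathrm{Tr}\bigl(\mathrm{Frob}_p \mid H^i_c(V_{\bar{\mathbb{F}}_p}, \mathcal{F})\bigr).
\]
By Deligne's Weil~II, each Frobenius eigenvalue on $H^i_c(V,\mathcal{F})$ is of some weight $w \le i$ with archimedean absolute value at most $p^{w/2}$. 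The bound therefore follows from two independent statements: a \emph{weight bound} asserting that every Frobenius weight appearing in $\bigoplus_i H^i_c(V,\mathcal{F})$ is at most $n + 1 + \delta$, and a \emph{dimension bound} $\sum_i \dim H^i_c(V,\mathcal{F}) \le (4d+9)^{n+r}$.

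For the weight bound, I would study how $\mathcal{F}$ extends to the projective compactification $X$ of $V$. The sheaf is lisse on $V$ but ramified along the divisor at infinity $X \cap L$, and its wild ramification is concentrated along the locus where the top-degree part of $f$ vanishes, namely $X \cap L \cap P$. Applying a version of Katz's affine Lefschetz theorem adapted to the singular setting, one relates the top-weight part of $H^*_c(V,\mathcal{F})$ to cohomology supported on the singular locus of $X \cap L \cap P$. Since by hypothesis this singular locus has dimension $\delta$, the top Frobenius weight is forced to be at most $n + 1 + \delta$. The assumption that $X \cap L \cap P$ has dimension $n-2$ (i.e.\ it is a proper complete intersection slice) is what makes this hyperplane-section argument run.

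For the dimension bound, I would appeal to Bezout-type estimates for sums of compactly supported Betti numbers with Artin--Schreier coefficients on a subvariety of $\mathbb{A}^r$. Since $V$ has degree $d$ and $f$ has degree at most $d$, general results (in the tradition of Adolphson--Sperber and Katz--Laumon) bound the total cohomology polynomially in the input degrees, and one can trace the constants to obtain the explicit bound $(4d+9)^{n+r}$.

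The main obstacle is the weight-bound step: making precise the assertion that the singular dimension $\delta$ of the boundary stratum $X \cap L \cap P$ controls the top Frobenius weight on $H^*_c(V, \mathcal{F})$ requires a delicate analysis of vanishing cycles combined with wild ramification of $\mathcal{F}$ along $L$. This is the technical heart of Katz's argument in \cite{Kat99}, and in a self-contained write-up one would either import it directly or reprove it via a perverse-sheaf / semicontinuity analysis of the nearby cycles of $\mathcal{F}$ along the boundary.
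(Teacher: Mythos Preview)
The paper does not prove this lemma at all: it is quoted verbatim as \cite[Theorem~5]{Kat99} and used as a black-box input to the rest of the argument. There is therefore nothing in the paper to compare your proposal against; what you have written is a sketch of Katz's own proof rather than of anything the authors do.

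As a sketch of Katz's argument, your outline is in the right spirit---trace formula, Weil~II, and control of the cohomology via the geometry at infinity is exactly the framework. Two small corrections are worth flagging. First, the mechanism is not quite that ``every Frobenius weight on $\bigoplus_i H^i_c(V,\mathcal{F})$ is at most $n+1+\delta$''; rather, one proves a vanishing statement $H^i_c(V,\mathcal{F})=0$ for $i>n+1+\delta$ (this is where the affine Lefschetz/nearby-cycles input tied to $\dim\mathrm{Sing}(X\cap L\cap P)=\delta$ enters), and then Weil~II bounds the weight on each surviving $H^i_c$ by $i\le n+1+\delta$. Second, you assume $\deg f\le d$, but the lemma as stated places no degree restriction on $f$; the dependence of the constant $(4d+9)^{n+r}$ on the input data is more subtle and in Katz's setup is absorbed through the hypothesis on $X\cap L\cap P$ rather than through a direct bound on $\deg f$. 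For the purposes of this paper, though, none of this matters: the lemma is simply imported.
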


\begin{remark}
For the case of curves, one can use Bombieri's result \cite{Bom78} instead of Lemma \ref{lemKatz} to obtain an estimate of the same strength, but with the assumptions (except $f$ being 
nonconstant on $V$) in Lemma \ref{lemKatz} dropped.
\end{remark}

The second lemma is the following estimate.

\begin{lemma}\label{lem2}
Let $p$ be a large prime. For any interval $\CI$, we have
\begin{equation*}
\sum_{t\neq 0\textup{~mod $p$}} \abs{\sum_{m\in\CI}e_p(tm)} \leq 2p\log{p}.
\end{equation*}
\end{lemma}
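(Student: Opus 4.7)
The plan is to proceed by the classical geometric-series estimate for linear exponential sums, combined with a harmonic-sum bound. Write $\CI = \{a, a+1, \ldots, a+L-1\}$ with $L = \abs{\CI}$. For each fixed $t \not\equiv 0 \pmod p$, the inner sum is a geometric progression:
\begin{equation*}
\sum_{m\in\CI} e_p(tm) = e_p(ta)\cdot \frac{e_p(tL)-1}{e_p(t)-1},
\end{equation*}
so its absolute value equals $\abs{\sin(\pi tL/p)}/\abs{\sin(\pi t/p)}$ and is in particular bounded by $1/\abs{\sin(\pi t/p)}$.

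Next, I would estimate $\sum_{t=1}^{p-1} 1/\abs{\sin(\pi t/p)}$. Using the elementary concavity inequality $\sin(\pi x) \ge 2x$ on $[0,1/2]$, one has $\abs{\sin(\pi t/p)} \ge 2\|t/p\|$, where $\|\cdot\|$ denotes the distance to the nearest integer. For $1 \le t \le (p-1)/2$ this gives $\abs{\sin(\pi t/p)} \ge 2t/p$, and by the symmetry $t \leftrightarrow p-t$ the same bound holds on the upper half of the range with $t$ replaced by $p-t$. Splitting the sum and pairing symmetric terms yields
\begin{equation*}
\sum_{t=1}^{p-1}\frac{1}{\abs{\sin(\pi t/p)}} \;\le\; 2\sum_{t=1}^{(p-1)/2}\frac{p}{2t} \;=\; p\sum_{t=1}^{(p-1)/2}\frac{1}{t}.
\end{equation*}

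Applying the standard harmonic bound $\sum_{t=1}^{M} 1/t \le \log M + 1$ with $M = (p-1)/2$ produces a quantity that is at most $p\log p + p$, which is bounded by $2p\log p$ once $p$ is sufficiently large. Combining this with the per-term estimate of the first paragraph finishes the proof. No step is a real obstacle; the only minor care is handling the symmetry $t \mapsto p-t$ and absorbing the additive $p$ into the $2p\log p$ bound via the assumption that $p$ is large.
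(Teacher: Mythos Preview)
Your proof is correct and follows essentially the same route as the paper: geometric-series evaluation of the inner sum, the bound $\abs{\sin(\pi t/p)}^{-1}$ via the concavity inequality $\sin(\pi x)\ge 2x$ on $[0,1/2]$, the symmetry $t\leftrightarrow p-t$, and the harmonic-sum estimate to reach $2p\log p$. The only cosmetic difference is that the paper bounds the numerator by $2$ before passing to sines, whereas you first write the ratio $\abs{\sin(\pi tL/p)}/\abs{\sin(\pi t/p)}$ and then bound the numerator by $1$; the arithmetic is the same.
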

\begin{proof}
Let $\CI\cap\mathbb{Z} = \{l,l+1,\dots,l+h-1\}$, where $h=\abs{\CI}$. Then
\begin{equation*}
\sum_{m\in\CI}e_p(tm) =
\begin{cases}
h & \text{if}~ t=0, \\
\left(e^{\frac{-2\pi itl}{p}}\right)\frac{1-e^{-2\pi ith/p}}{1-e^{-2\pi it/p}} & \text{if}~ t\neq 0.
\end{cases}
\end{equation*}
Hence if $t\neq 0$,
\begin{equation*}
\abs{\sum_{m\in\CI}e_p(tm)} \leq \frac{2}{\abs{1-e^{-2\pi it/p}}}.
\end{equation*}
Note that $\abs{1-e^{-2\pi it/p}} = 2\abs{\sin{(\pi t/p)}} \geq \frac{\pi\abs{s}}{p}$ for $p$ large enough, where $s$ is the least absolute residue of $t$ modulo $p$. We thus obtain the estimate
\begin{equation*}
\abs{\sum_{m\in\CI}e_p(tm)} \leq \frac{2p}{\pi\abs{s}} \leq \frac{p}{\abs{s}}.
\end{equation*}
The lemma is obtained by summing over
$1\leq\abs{s}\leq (p-1)/2$, using 
\begin{equation*}
1+\frac{1}{2}+\dots+\frac{1}{\frac{p-1}{2}} \leq \log{p}.
\end{equation*}
\end{proof}

\section{Proof of Theorem \ref{thm2}}

Let $\CI_i, \CJ_j$ be intervals, and let $\CB=\CI_1\times\ldots\times\CI_r$, $\CB'=\CJ_1\times\ldots\times\CJ_s$ be boxes. From the orthogonality of exponential sums
\begin{equation*}
\sum_{u_i \textup{~mod $p$}}e_p(u_i(n_i-m_i)) =
\begin{cases}
p, & \text{~if~} n_i=m_i, \\
0, & \text{~otherwise,}
\end{cases}
\end{equation*}
we have
\begin{equation*}
\sum_{m_i\in \CI_i}\sum_{u_i \textup{~mod $p$}}e_p(u_i(n_i-m_i)) =
\begin{cases}
p, & \text{~if~} n_i\in \CI_i, \\
0, & \text{~otherwise.}
\end{cases}
\end{equation*}
Next, we express the quantity $N_{\CB,\CB'}(V,\mathbf{g})$ in terms of exponential sums. Let $\mathbf{u}=(u_1,\ldots,u_r)$, $\mathbf{v}=(v_1,\ldots,v_s)$, $\mathbf{m}=(m_1,\ldots,m_r)$, $\mathbf{n}=(n_1,\ldots,n_s)$ and $\mathbf{z}=(z_1,\ldots,z_r)$, then
\begin{align}
N_{\CB}(V)&=\sum_{\mathbf{z}\in V}\frac{1}{p^{r+s}} \sum_{\mathbf{m}\in\CB}\sum_{\mathbf{n}\in\CB'} \sum_{\mathbf{u}\in\mathbb{F}_p^r}\sum_{\mathbf{v}\in\mathbb{F}_p^s} \prod_{i=1}^r e_p((z_i-m_i)u_i)\prod_{j=1}^s e_p((g_i(\mathbf{z})-n_j)v_j) \nonumber \\
&= \frac{1}{p^{r+s}}\prod_{i=1}^r\left( \sum_{u_i\textup{~mod $p$}}\sum_{m_i\in\CI_i} e_p(-m_iu_i) \right) \prod_{j=1}^s\left( \sum_{v_j\textup{~mod $p$}}\sum_{n_j\in\CJ_j} e_p(-n_jv_j) \right) \nonumber \\
&\qquad\times\sum_{\mathbf{z}\in V}e_p(z_1u_1+\ldots+z_ru_r+g_1(\mathbf{z})v_1+\ldots+g_s(\mathbf{z})v_s) \nonumber \\
& = M+E, \label{eqnthm21}
\end{align}
where the main term $M$ corresponds to the sum of terms with $\mathbf{u}=\mathbf{v}=0$, 
and $E$ corresponds to the sum of all other terms. We have
\begin{equation}\label{eqnM}
M = \frac{1}{p^{r+s}}\vol(\CB)\vol(\CB')N(V),
\end{equation}
where $N(V)$ is the number of points on $V$. On the other hand, because the set $\{ 1,x_1,\ldots,x_r,g_1(\mathbf{p}),\ldots,g_s(\mathbf{p}) \}$ is linearly independent on $V$, the variety $X\cap L\cap H_{\mathbf{u},\mathbf{v}}$ has dimension $n-2$ if $(\mathbf{u},\mathbf{v})\neq(0,0)$. Thus we may apply Lemma \ref{lemKatz} and Lemma \ref{lem2} to obtain
\begin{align}
E &\ll \frac{p^{\frac{n+1+\delta}{2}}}{p^{r+s}}\prod_{i=1}^r\left( \sum_{u_i\textup{~mod $p$}}\abs{\sum_{m_i\in\CI_i} e_p(-m_iu_i)} \right) \prod_{j=1}^s\left( \sum_{v_j\textup{~mod $p$}}\abs{\sum_{n_j\in\CJ_j} e_p(-n_jv_j)} \right) \nonumber \\
& \ll \frac{p^{\frac{n+1+\delta}{2}}}{p^{r+s}}(2p\log{p}+p)^{r+s} \nonumber \\
& \ll p^{\frac{n+1+\delta}{2}}\log^{r+s}{p}. \label{eqnE}
\end{align}
Using this and \eqref{eqnM} in \eqref{eqnthm21} yields Theorem \ref{thm2}.

\section{A counting argument: Proof of Theorem \ref{thm3}}

Let $\CB=\CI_1\times\ldots\times\CI_r$ and $\CB'=\CJ_1\times\ldots\times\CJ_s$. Since we are considering all translates $\CB_{\mathbf{x}}$ and $\CB'_{\mathbf{y}}$, we may assume that $\CI_i=[0,L_i)$ and $CJ_j=[0,L_j')$, where $L_i=\abs{\CI_i}$ and $L_j=\abs{\CJ_j}$. Let $L$ and $L'$ be the lattices
\begin{align*}
L &= \{ \mathbf{x}=(x_1,\ldots,x_r)|x_i=a_iL_i, a_i\in\mathbb{Z}, 0\leq a_i\leq p/L_i \}, \\
L' &= \{ \mathbf{y}=(y_1,\ldots,y_s)|y_j=b_jL_j, b_j\in\mathbb{Z}, 0\leq b_j\leq p/L'_j \}.
\end{align*}
Then the boxes $\CB_{\mathbf{x}}$ for $\mathbf{x}\in L$ are non-overlapping, except those with $a_j=[p/L_j]$ may overlap with some other boxes with $a_j=0$. Similarly, the boxes $\CB'_{\mathbf{y}}$ are also non-overlapping except for the boundary cases. In particular, any point in $V$ can only be inside at most $2^r$ boxes $\CB_{\mathbf{x}}$, and can be mapped inside at most $2^s$ boxes $\CB'_{\mathbf{y}}$. There are $p^{r+s}/(\vol(\CB)\vol(\CB'))$ boxes in total, and the number of points in $V$ satisfies $N(V)=O(p^n)$. By condition \eqref{eqnvolBsmall}, we have
\begin{equation*}
p^n=O\left(\frac{p^{r+s}}{\vol(\CB)\vol(\CB')}\right).
\end{equation*}
Thus, regardless of the choice of $\mathbf{g}$,
for almost all pairs of boxes $\CB_{\mathbf{x}}$, $\CB'_{\mathbf{y}}$ with $\mathbf{x}\in L$, 
$\mathbf{y}\in L'$, we must have $N_{\CB_{\mathbf{x}},\CB'_{\mathbf{y}}}(V,\mathbf{g})=0$,  
except for at most $O_{r,s}(p^n)$ of them. 
Repeating the above argument to all pairs $(\mathbf{x}_0,\mathbf{y}_0)+(L\times L')$ with $\mathbf{x}_0\in\CB$ and $\mathbf{y}_0\in\CB'$ proves Theorem \ref{thm3}.

\begin{remark}
The counting argument here works more generally for a $V$ that needs not be irreducible. Since $V$ has a finite  number of components, and each component has $O(p^n)$ rational points, $V$ itself has only $O(p^n)$ points as well. Thus the argument above still carries through in that case.
\end{remark}

\section{Proof of Theorem \ref{thm4}}

Let
\begin{equation*}
E_{\CB_{\mathbf{x}},\CB'_{\mathbf{y}}} = N_{\CB_{\mathbf{x}},\CB'_{\mathbf{y}}}(V,\mathbf{g})-N(V)\cdot\frac{\vol(\CB)}{p^r}\cdot\frac{\vol(\CB')}{p^s}
\end{equation*}
be the difference between $N_{\CB_{\mathbf{x}},\CB'_{\mathbf{y}}}(V,\mathbf{g})$ and the expected number of points. In this section, we calculate the second moment of $E_{\CB_{\mathbf{x}},\CB'_{\mathbf{y}}}$.

Let $\CB=\CI_1\times\ldots\times\CI_r$ and $\CB'=\CJ_1\times\ldots\times\CJ_s$. By \eqref{eqnthm21} and \eqref{eqnM}, we see that
\begin{align}
E_{\CB,\CB'} &= \frac{1}{p^{r+s}}\sum_{(\mathbf{u},\mathbf{v})\neq (0,0)\in\mathbb{F}_p^r\times\mathbf{F}_p^s}\sum_{\substack{0\leq m_i< L_i \\ 1\leq i\leq r}} e_p(-m_1u_1-\ldots-m_ru_r) \nonumber \\
&\qquad \times\sum_{\substack{0\leq n_j< L'_j \\ 1\leq j\leq s}} e_p(-n_1v_1-\ldots-n_sv_s) \nonumber \\
&\qquad\qquad \times\sum_{\mathbf{z}\in V}e_p(z_1u_1+\ldots+z_ru_r+g_1(\mathbf{z})v_1+\ldots+g_s(\mathbf{z})v_s). \label{eqnthm41}
\end{align}

Since we are moving the boxes $\CB, \CB'$ around the whole spaces, we may assume that $\CI_i=[0,L_i)$ and $\CJ_j=[0,L_j')$. Then for any $\mathbf{x}=(x_1,\ldots,x_r)\in\mathbb{F}_p^r$, $\mathbf{y}=(y_1,\ldots,y_s)\in\mathbb{F}_p^s$, we can write
\begin{align*}
\CB_{\mathbf{x}} &= \mathbf{x}+\CB = [x_1,x_1+L_1)\times\ldots\times[x_r,x_r+L_r), \\
\CB'_{\mathbf{y}} &= \mathbf{y}+\CB' = [x_1,x_1+L_1')\times\ldots\times[x_s,x_s+L_s').
\end{align*}
Therefore, \eqref{eqnthm41} implies that
\begin{align*}
E_{\CB_{\mathbf{x}},\CB'_{\mathbf{y}}} &= \frac{1}{p^{r+s}}\sum_{(\mathbf{u},\mathbf{v})\neq (0,0)} \sum_{\substack{0\leq m_i< L_i \\ 1\leq i\leq r}} e_p(-(m_1+x_1)u_1-\ldots-(m_r+x_r)u_r) \\
& \qquad \times \sum_{\substack{0\leq n_j< L'_j \\ 1\leq j\leq s}} e_p(-(n_1+x_1)v_1-\ldots-(n_s+y_s)v_s) \\
&\qquad\qquad \times \sum_{\mathbf{z}\in V}e_p(z_1u_1+\ldots+z_ru_r+g_1(\mathbf{z})v_1+\ldots+g_s(\mathbf{z})v_s).
\end{align*}

We remark that by \eqref{eqnE}, we have
\begin{equation*}
E_{\CB_{\mathbf{x}},\CB'_{\mathbf{y}}} = O(p^{\frac{n+1+\delta}{2}}\log^{r+s}{p})
\end{equation*}
for any $\mathbf{x},\mathbf{y}$, but this is not strong enough for our purpose.

The second moment of $E_{\CB_{\mathbf{x}},\CB'_{\mathbf{y}}}$ is given by
\begin{align}
& \sum_{\mathbf{x}\in\mathbf{F}_p^r}\sum_{\mathbf{y}\in\mathbf{F}_p^s} E_{\CB_{\mathbf{x}},\CB'_{\mathbf{y}}}^2 \nonumber \\
=& \frac{1}{p^{2(r+s)}}\sum_{\mathbf{x}\in\mathbb{F}_p^r}\sum_{\mathbf{y}\in\mathbb{F}_p^s}\sum_{(\mathbf{u}_1,\mathbf{v}_1)\neq (0,0)}\sum_{(\mathbf{u}_2,\mathbf{v}_2)\neq (0,0)}\sum_{\substack{0\leq m_{1i}< L_i \\ 1\leq i\leq r}}\sum_{\substack{0\leq m_{2i}< L_i \\ 1\leq i\leq r}}\sum_{\substack{0\leq n_{1j}< L'_j \\ 1\leq j\leq s}}\sum_{\substack{0\leq n_{2j}< L'_j \\ 1\leq j\leq s}}  \nonumber \\
&~ e_p\left(-\sum_{i=1}^r(m_{1i}+x_i)u_{1i}+\sum_{i=1}^r(m_{2i}+x_i)u_{2i}\right) \nonumber \\
&~~ \times e_p\left(-\sum_{j=1}^s(n_{1j}+y_j)v_{1j}+\sum_{j=1}^s(n_{2j}+y_j)v_{2j}\right) \nonumber \\
&\qquad \times\sum_{\mathbf{z}_1\in V}\sum_{\mathbf{z}_2\in V} e_p\left(\sum_{i=1}^r z_{1i}u_{1i}+\sum_{j=1}^s g_j(\mathbf{z}_1)v_{1s}\right) \nonumber \\
&\qquad\qquad\qquad \times e_p\left(-\sum_{i=1}^r z_{2i}u_{2i}-\sum_{j=1}^s g_j(\mathbf{z}_2)v_{2s}\right), \label{eqnS1}
\end{align}
where $u_1=(u_{11},\ldots,u_{1r})$, $u_2=(u_{21},\ldots,u_{2r})$, and similarly for $\mathbf{v}_1$, $\mathbf{v}_2$, $\mathbf{z}_1$, $\mathbf{z}_2$.
To simplify notations, we temporarily write
\begin{align*}
A_1 &:= -\sum_{i=1}^r m_{1i}u_{1i}+\sum_{i=1}^r m_{2i}u_{2i}, \\
A_2 &:= -\sum_{j=1}^s n_{1j}v_{1j}+\sum_{j=1}^s n_{2j}v_{2j}, \\
B_1 &:= \sum_{i=1}^r z_{1i}u_{1i}+\sum_{j=1}^s g_j(\mathbf{z}_1)v_{1s}, \\
B_2 &:= -\sum_{i=1}^r z_{2i}u_{2i}-\sum_{j=1}^s g_j(\mathbf{z}_2)v_{2s}.
\end{align*}
Then by changing the order of summation in \eqref{eqnS1}, the sum becomes
\begin{multline*}
S:=\frac{1}{p^{2(r+s)}}\sum_{(\mathbf{u}_1,\mathbf{v}_1)\neq (0,0)}\sum_{(\mathbf{u}_2,\mathbf{v}_2)\neq (0,0)}\sum_{\substack{0\leq m_{1i}< L_i \\ 1\leq i\leq r}}\sum_{\substack{0\leq m_{2i}< L_i \\ 1\leq i\leq r}}\sum_{\substack{0\leq n_{1j}< L'_j \\ 1\leq j\leq s}}\sum_{\substack{0\leq n_{2j}< L'_j \\ 1\leq j\leq s}} \\
\sum_{\mathbf{z}_1\in V}\sum_{\mathbf{z}_2\in V} e_p(A_1)e_p(A_2) e_p(B_1)e_p(B_2) \\
\times \sum_{\mathbf{x}\in\mathbb{F}_p^r}\sum_{\mathbf{y}\in\mathbb{F}_p^s}e_p(x_1(u_{21}-u_{11}))\ldots e_p(x_r(u_{2r}-u_{1r})) \\
\times e_p(y_1(v_{21}-v_{11}))\ldots e_p(y_s(v_{2s}-v_{1s})).
\end{multline*}
The innermost sum is zero unless $\mathbf{u}_1=\mathbf{u}_2$ and $\mathbf{v}_1=\mathbf{v}_2$, and in that case it equals $p^{r+s}$. Writing $\mathbf{u}=\mathbf{u}_1=\mathbf{u}_2$ and $\mathbf{v}=\mathbf{v}_1=\mathbf{v}_2$, the above sum becomes
\begin{multline*}
\frac{1}{p^{r+s}}\sum_{(\mathbf{u},\mathbf{v})\neq (0,0)}\sum_{\substack{0\leq m_{1i}< L_i \\ 1\leq i\leq r}}\sum_{\substack{0\leq m_{2i}< L_i \\ 1\leq i\leq r}}\sum_{\substack{0\leq n_{1j}< L'_j \\ 1\leq j\leq s}}\sum_{\substack{0\leq n_{2j}< L'_j \\ 1\leq j\leq s}} e_p(A_1)e_p(A_2) \\
\times \sum_{\mathbf{z}_1\in V}e_p\left(\sum_{i=1}^r z_{1i}u_{1i}+\sum_{j=1}^s g_j(\mathbf{z}_1)v_{1s}\right) \sum_{\mathbf{z}_2\in V}e_p\left(-\sum_{i=1}^r z_{2i}u_{2i}-\sum_{j=1}^s g_j(\mathbf{z}_2)v_{2s}\right).
\end{multline*}
The sums over $\mathbf{z}_1$ and $\mathbf{z}_2$ can be estimated using Lemma \ref{lemKatz} as $(\mathbf{u},\mathbf{v})\neq (0,0)$. The above sum $S$ is thus bounded by
\begin{equation}\label{eqnthm11}
S \ll_V \frac{p^{n+1+\delta}}{p^{r+s}}\sum_{(\mathbf{u},\mathbf{v})\neq (0,0)}\left|\sum_{\substack{0\leq m_{1i}< L_i \\ 1\leq i\leq r}}\sum_{\substack{0\leq m_{2i}< L_i \\ 1\leq i\leq r}}\sum_{\substack{0\leq n_{1j}< L'_j \\ 1\leq j\leq s}}\sum_{\substack{0\leq n_{2j}< L'_j \\ 1\leq j\leq s}} e_p(A_1)e_p(A_2) \right|.
\end{equation}
We now return to the definition of $A_1$ and $A_2$. For any fixed $(\mathbf{u},\mathbf{v})\neq (0,0)$, the sum inside the above absolute value equals
\begin{equation*}
\left|\sum_{\substack{0\leq m_{i}< L_i \\ 1\leq i\leq r}}\sum_{\substack{0\leq n_{j}< L'_j \\ 1\leq j\leq s}}e_p(-m_1u_1-\ldots-m_ru_r-n_1v_1-\ldots-n_sv_s)\right|^2,
\end{equation*}
which is positive. Hence we can remove the absolute sign in \eqref{eqnthm11} and move the sum over $(\mathbf{u},\mathbf{v})$ inside. This gives
\begin{align}
S &\ll_V \frac{p^{n+1+\delta}}{p^{r+s}}\sum_{\substack{0\leq m_{1i}< L_i \\ 1\leq i\leq r}}\sum_{\substack{0\leq m_{2i}< L_i \\ 1\leq i\leq r}}\sum_{\substack{0\leq n_{1j}< L'_j \\ 1\leq j\leq s}}\sum_{\substack{0\leq n_{2j}< L'_j \\ 1\leq j\leq s}} \nonumber \\
&\qquad \sum_{(\mathbf{u},\mathbf{v})\neq (0,0)} e_p(u_1(m_{21}-m_{11}))\ldots e_p(u_r(m_{2r}-m_{1r})) \nonumber \\
&\qquad\qquad\times e_p(v_1(n_{21}-n_{11}))\ldots e_p(v_s(n_{2s}-n_{1s})) \nonumber \\
&= \frac{p^{n+1+\delta}}{p^{r+s}}\sum_{\substack{0\leq m_{1i}< L_i \\ 1\leq i\leq r}}\sum_{\substack{0\leq m_{2i}< L_i \\ 1\leq i\leq r}}\sum_{\substack{0\leq n_{1j}< L'_j \\ 1\leq j\leq s}}\sum_{\substack{0\leq n_{2j}< L'_j \\ 1\leq j\leq s}} \nonumber \\
&\qquad \sum_{\mathbf{u}\in\mathbb{F}_p^r}\sum_{\mathbf{v}\in\mathbb{F}_p^s} e_p(u_1(m_{21}-m_{11}))\ldots e_p(u_r(m_{2r}-m_{1r})) \nonumber \\
&\qquad\qquad \times e_p(v_1(n_{21}-n_{11}))\ldots e_p(v_s(n_{2s}-n_{1s})) \nonumber \\
&\qquad\qquad\qquad - \frac{p^{n+1+\delta}(\vol(\CB))^2(\vol(\CB'))^2}{p^{r+s}}. \label{eqnS2}
\end{align}
The sums over $\mathbf{u}$ and $\mathbf{v}$ vanish unless $(m_{11},\ldots,m_{1r})=(m_{21},\ldots,m_{2r})$ and $(n_{11},\ldots,n_{1s})=(n_{21},\ldots,n_{2s})$, in which case the sum 
equals $p^{r+s}$. Therefore, \eqref{eqnS2} amounts to
\begin{equation*}
S \ll p^{n+1+\delta}\vol(\CB)\vol(\CB').
\end{equation*}
This proves Theorem \ref{thm4}.

Now we proceed to prove Corollary \ref{cor4} and \ref{cor4a}. Let $N$ be the number of pairs $(\mathbf{x},\mathbf{y})$ such that \eqref{eqnNB'ud} does not hold, and let $E$ be the exception set of these $(\mathbf{x},\mathbf{y})$.
By Theorem \ref{thm4},
\begin{equation*}
\sum_{(\mathbf{x},\mathbf{y})\in E}\abs{N_{\CB_{\mathbf{x}},\CB'_{\mathbf{y}}}(V,\mathbf{g})-N(V)\cdot\frac{\vol(\CB)\vol(\CB')}{p^{r+s}}}^2 \ll_V p^{n+1+\delta}\vol(\CB)\vol(\CB').
\end{equation*}
On the other hand, since 
\eqref{eqnNB'ud} does not hold, we have
\begin{align*}
\abs{N_{\CB_{\mathbf{x}},\CB'_{\mathbf{y}}}(V,\mathbf{g})-N(V)\cdot\frac{\vol(\CB)\vol(\CB')}{p^{r+s}}}^2 &\gg N(V)^2\cdot\frac{\vol(\CB)^2\vol(\CB')^2}{p^{2(r+s)}} \\
&\sim p^{2n}\cdot\frac{\vol(\CB)^2\vol(\CB')^2}{p^{2(r+s)}}
\end{align*}
for all $(\mathbf{x},\mathbf{y})\in E$. This implies
\begin{equation*}
N \cdot p^{2n}\cdot\frac{\vol(\CB)^2\vol(\CB')^2}{p^{2(r+s)}} \ll p^{n+1+\delta}\vol(\CB)\vol(\CB'),
\end{equation*}
which reduces to
\begin{equation*}
N \ll_V \frac{p^{2r+2s-n+1+\delta}}{\vol(\CB)\vol(\CB')}.
\end{equation*}
Hence, if $\vol(\CB)\vol(\CB')$ satisfies \eqref{eqnvolB'ok}, then $N = o(p^{r+s})$. This gives Corollary \ref{cor4}.

The proof of Corollary \ref{cor4a} is similar. Write
\begin{align*}
\CB = [b_1,b_1+L_1)\times\ldots\times[b_r,b_r+L_r), \\
\CB' = [b_1',b_1'+L_1')\times\ldots\times[b_s,b_s+L_s'),
\end{align*}
so that $\vol(\CB)=L_1\cdots L_r$ and $\vol(\CB')=L_1'\cdots L_s'$. If $N_{\CB,\CB'}(V,\mathbf{g})=0$, then
\begin{equation}\label{eqncor4a}
N_{\frac{1}{2}\CB_{\mathbf{x}},\frac{1}{2}\CB'_{\mathbf{y}}}=0
\end{equation}
for all $\mathbf{x}=(x_1,\ldots,x_r)$, $\mathbf{y}=(y_1,\ldots,y_s)$ with $0\leq x_i\leq L_i/2$ and $0\leq y_j\leq L_j'/2$. Here $\frac{1}{2}\CB$ is the box obtained by halving the sides of $\CB$ and with the same lower-left corner, similarly for $\frac{1}{2}\CB'$. The total number of these boxes appearing in \eqref{eqncor4a} is $\frac{1}{2^{r+s}}\vol(\CB)\vol(\CB')$. The boxes $\frac{1}{2}\CB,\frac{1}{2}\CB'$ also satisfies the conditions in Theorem \ref{thm4}, so we can apply the theorem and get
\begin{equation*}
\frac{1}{2^{r+s}}\vol(\CB)\vol(\CB') \abs{0-N(V)\cdot\frac{\vol(\CB)\vol(\CB')}{(2p)^{r+s}}}^2 \ll_V p^{n+1+\delta}\vol(\CB)\vol(\CB'),
\end{equation*} 
which simplifies to $\vol(\CB)\vol(\CB')=O(p^{r+s-\frac{n-1-\delta}{2}})$. This completes the proof of Corollary \ref{cor4a}.


\end{document}